\numberwithin{equation}{section}
\theoremstyle{plain}
\newtheorem{thm}{Theorem}[section]
\newtheorem{lem}[thm]{Lemma}
\theoremstyle{definition}
\theoremstyle{remark}
\begin{document}

\title{Infinite and finite dimensional Hilbert tensors}%
 \thanks{Email: songyisheng1@gmail.com(Song); maqilq@polyu.edu.hk(Qi)}
 \thanks{The work was supported by the Hong Kong Research Grant Council (Grant No. PolyU 502510, 502111, 501212, 501913),  the first author was supported partly by the National Natural Science Foundation of P.R. China (Grant No. 11171094, 11271112) and by the Research Projects of Department of Science and Technology of Henan Province (Grant No. 122300410414,132300410432).}
 \maketitle
 \begin{center}{Yisheng Song$^1$ and Liqun Qi$^2$}\\\vskip 2mm
 1. School of Mathematics and Information Science,
 Henan Normal University, XinXiang, HeNan,  P.R. China, 453007.\\\vskip 2mm
  2. Department of Applied Mathematics, The Hong Kong Polytechnic University, Hung Hom, Kowloon, Hong Kong\end{center}
 %
\vskip 4mm
\begin{quote}
{\bf Abstract.}\ For an $m$-order $n-$dimensional Hilbert  tensor
(hypermatrix) $\mathcal{H}_n=(\mathcal{H}_{i_1i_2\cdots i_m})$,
$$\mathcal{H}_{i_1i_2\cdots i_m}=\frac1{i_1+i_2+\cdots+i_m-m+1},\
i_1,\cdots, i_m=1,2,\cdots,n$$ its spectral radius is not larger
than $n^{m-1}\sin\frac{\pi}{n}$, and an upper bound of its
$E$-spectral radius is $n^{\frac{m}2}\sin\frac{\pi}{n}$. Moreover,
its spectral radius is strictly increasing and  its $E$-spectral
radius is nondecreasing with respect to the dimension $n$.  When the
order is even, both infinite and finite dimensional Hilbert tensors
are positive definite.   We also show that the $m$-order infinite
dimensional Hilbert  tensor (hypermatrix)
$\mathcal{H}_\infty=(\mathcal{H}_{i_1i_2\cdots i_m})$ defines a
bounded and positively $(m-1)$-homogeneous operator from $l^1$ into
$l^p$ ($1<p<\infty$), and the norm of corresponding positively
homogeneous operator is smaller than or equal to
$\frac{\pi}{\sqrt6}$.
  \\
 {\bf Key Words and Phrases:} Hilbert  tensor, Positively homogeneous, Eigenvalue, Spectral radius.\\
{\bf 2000 AMS Subject Classification:} 47H15, 47H12, 34B10, 47A52, 47J10, 47H09, 15A48, 47H07.
\end{quote}

\pagestyle{fancy} \fancyhead{} \fancyhead[EC]{ Yisheng Song and
Liqun Qi} \fancyhead[EL,OR]{\thepage} \fancyhead[OC]{Hilbert
tensors} \fancyfoot{}

\section{\bf Introduction}\label{}
In linear algebra, an $n$-dimensional Hilbert matrix $H_n=(H_{ij})$
is a square matrix with entries being the unit fractions, i.e.,
$$H_{ij}=\frac1{i+j-1},\ i,j=1,2,\cdots,n,$$
which was introduced by Hilbert \cite{H1894}. Clearly, an
$n$-dimensional Hilbert matrix is symmetric and positive definite,
and is a compact linear operator on finite dimensional space. Many
nice properties of $n$-dimensional Hilbert matrix have been
inivestigated by Frazer\cite{F1946} and Taussky \cite{T1949}. An
infinite dimensional Hilbert matrix
$$H_\infty=\left(\frac1{i+j-1}\right),\ i,j=1,2,\cdots,n,\cdots$$
can be regarded as a bounded linear operator from Hilbert space
$l^2$ into itself (here, $l^p$ ($0<p<\infty$) is a space consisting
of all sequences $x=(x_i)_{i=1}^\infty$ satisfying
$\sum\limits_{i=1}^{\infty}|x_i|^p<+\infty$), but not compact
operator (Choi \cite{C1983}) and Ingham \cite{I1936}). The spectral
properties of infinite dimensional Hilbert matrix have been studied
by  Magnus \cite{M1950} and Kato \cite{K1957}.

As a natural extension of a Hilbert matrix, the entries of an
$m$-order $n$-dimensional Hilbert tensor (hypermatrix)
$\mathcal{H}_n=(\mathcal{H}_{i_1i_2\cdots i_m})$ are defined by
$$\mathcal{H}_{i_1i_2\cdots i_m}=\frac1{i_1+i_2+\cdots+i_m-m+1},\ i_1,i_2,\cdots, i_m=1,2,\cdots,n.$$
 The entries of an $m$-order infinite dimensional Hilbert  tensor (hypermatrix) $\mathcal{H}_\infty=(\mathcal{H}_{i_1i_2\cdots i_m})$ are defined by
$$\mathcal{H}_{i_1i_2\cdots i_m}=\frac1{i_1+i_2+\cdots+i_m-m+1},\ i_1,i_2,\cdots, i_m=1,2,\cdots,n,\cdots.$$
The Hilbert tensor may be regarded as derived from the integral
\begin{equation}\label{e00} \mathcal{H}_{i_1i_2\cdots i_m}=\int_0^1 t^{i_1+i_2+\cdots+i_m-m}dt. \end{equation}

Clearly, both $\mathcal{H}_n$ and  $\mathcal{H}_\infty$ are positive ($\mathcal{H}_{i_1i_2\cdots i_m}>0$) and symmetric ($\mathcal{H}_{i_1i_2\cdots i_m}$ are invariant for any permutation of the indices), and an $m$-order $n$-dimensional  Hilbert tensor $\mathcal{H}_n$ is a Hankel tensor with $v=(1,\frac12, \frac13, \cdots,\frac{1}{nm})$ (Qi \cite{LQH}), and an $m$-order  infinite dimensional  Hilbert tensor $\mathcal{H}_\infty$ is a Hankel tensor with $v=(1,\frac12, \frac13, \cdots,\frac{1}{n},\cdots)$.\\

For a vector $x = (x_1, x_2,\cdots, x_n)^T\in \mathbb{R}^n$, $\mathcal{H}_nx^{m-1}$ is a vector defined by
\begin{equation}\label{eq:11}(\mathcal{H}_nx^{m-1})_i=\sum_{i_2,\cdots,i_m=1}^n\frac{x_{i_2}\cdots x_{i_m}}{i+i_2+\cdots+i_m-m+1},\  i=1,2,\cdots,n.\end{equation}
Then $x^T(\mathcal{H}_nx^{m-1})$ is a homogeneous polynomial,  denoted $\mathcal{H}_nx^m$, i.e., \begin{equation}\label{eq:12}\mathcal{H}_nx^m=x^T(\mathcal{H}_nx^{m-1})=\sum_{i_1,i_2,\cdots,i_m=1}^n\frac{x_{i_1}x_{i_2}\cdots x_{i_m}}{i_1+i_2+\cdots+i_m-m+1},\end{equation}
where $x^T$ is the transposition of $x$.

For a real vector $x = (x_1, x_2,\cdots, x_n,x_{n+1},\cdots)\in l^1$
(here $l^1$ is a space of sequences whose series is absolutely
convergent), $\mathcal{H}_\infty x^{m-1}$ is an infinite dimensional
vector defined by
\begin{equation}\label{eq:13}(\mathcal{H}_\infty x^{m-1})_i=\sum_{i_2,\cdots,i_m=1}^\infty\frac{x_{i_2}\cdots x_{i_m}}{i+i_2+\cdots+i_m-m+1},\ i=1,2,\cdots.\end{equation}
Accordingly,  $\mathcal{H}_\infty x^m$ is given by
\begin{equation}\label{eq:14}\mathcal{H}_\infty
x^m=\lim_{n\to\infty}\mathcal{H}_n
x^m=\sum_{i_1,i_2,\cdots,i_m=1}^\infty\frac{x_{i_1}x_{i_2}\cdots
x_{i_m}}{i_1+i_2+\cdots+i_m-m+1}.\end{equation} Then
$\mathcal{H}_\infty x^m$ is exactly a real number for each  real
vector $x\in l^1$, i.e., $\mathcal{H}_\infty x^m<\infty$. In fact,
since $\sum\limits_{i=1}^\infty|x_i|<\infty$ for $x = (x_1,
x_2,\cdots, x_n,x_{n+1},\cdots)\in l^1$, we have
$$\begin{aligned}\mathcal{H}_\infty x^m=\lim_{n\to\infty}\mathcal{H}_nx^m=&\lim_{n\to\infty}\sum_{i_1,i_2,\cdots,i_m=1}^n\frac{x_{i_1}x_{i_2}\cdots x_{i_m}}{i_1+i_2+\cdots+i_m-m+1}\\
\leq&\lim_{n\to\infty} \sum_{i_1,i_2,\cdots,i_m=1}^n\frac{|x_{i_1}x_{i_2}\cdots x_{i_m}|}{\underbrace{1+1+\cdots+1}_{m}-m+1}\\
=& \lim_{n\to\infty}\sum_{i_1,i_2,\cdots,i_m=1}^n|x_{i_1}||x_{i_2}|\cdots |x_{i_m}|\\
=&\lim_{n\to\infty}\left(\sum_{i=1}^n|x_i|\right)^m=\left(\sum_{i=1}^\infty|x_i|\right)^m<\infty.
\end{aligned}$$
In Section 2, we will prove that $\mathcal{H}_\infty x^{m-1}$ is
well defined, i.e., $\mathcal{H}_\infty x^{m-1}\in l^p$
$(1<p<\infty)$ for all real vector $x\in l^1$.

Both infinite and finite dimensional Hilbert tensors $\mathcal{H}_n$
and $\mathcal{H}_\infty$ are positive tensors.   Thus, they are
strictly copositive, i.e., $$\mathcal{H}_n x^m> 0 \mbox{ for all
}x\in \mathbb{R}_+^n\setminus\{\theta\}$$ and $$\mathcal{H}_\infty
x^m> 0\mbox{ for all real nonnegative vector }x\in
l^1\setminus\{\theta\},$$ where $\theta$ is zero vector with all
entries being $0$ and $\mathbb{R}^n_{+}=\{x\in
\mathbb{R}^n;x_i\geq0,\ i=1,2,\cdots,n\}$.  The concept of
(strictly) copositive tensors was introduced and used by Qi
\cite{LQ13}.

When the order $m$ is even, both infinite and finite dimensional
Hilbert tensors are positive definite.  The concept of positive
(semi-)definite tensors was introduced by Qi \cite{LQ1}.
\begin{thm} Let $m,n$ be two positive integers and $m$ be even. Then both $m-$order Hilbert tensors $\mathcal{H}_n$ and $\mathcal{H}_\infty$ are positive definite, i.e.,
$$\mathcal{H}_n x^m> 0 \mbox{ for all }x\in \mathbb{R}^n\setminus\{\theta\}$$ and $$\mathcal{H}_\infty x^m>  0\mbox{ for all real vector }x\in l^1\setminus\{\theta\}.$$
\end{thm}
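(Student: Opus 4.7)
The plan is to exploit the integral representation \eqref{e00} to rewrite $\mathcal{H}_n x^m$ and $\mathcal{H}_\infty x^m$ as integrals of an $m$-th power of a real function, and then use the fact that even powers are nonnegative, together with a uniqueness-of-coefficients argument to rule out the vanishing case.

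Concretely, for a vector $x\in\mathbb{R}^n$, set $f(t)=\sum_{i=1}^n x_i t^{i-1}$ on $[0,1]$. Then using \eqref{e00} and interchanging the finite sum with the integral gives
\[
\mathcal{H}_n x^m \;=\; \sum_{i_1,\dots,i_m=1}^n x_{i_1}\cdots x_{i_m}\int_0^1 t^{i_1+\cdots+i_m-m}\,dt \;=\; \int_0^1 f(t)^m\,dt.
\]
Since $m$ is even, the integrand is pointwise nonnegative, so $\mathcal{H}_n x^m\ge 0$. If equality held, $f$ would vanish a.e.\ on $[0,1]$; but $f$ is a polynomial of degree at most $n-1$, hence identically zero, forcing $x_1=\cdots=x_n=0$. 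This proves the finite-dimensional assertion.

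For $x\in l^1$, define $f(t)=\sum_{i=1}^\infty x_i t^{i-1}$. Because $\sum_{i=1}^\infty|x_i|<\infty$, this series converges absolutely and uniformly on $[0,1]$, so $f$ is continuous on $[0,1]$ (and in fact analytic on the open unit disk, since the radius of convergence is at least $1$). The same computation as above, justified by dominated convergence (the partial sums of $f(t)^m$ are dominated by $(\sum_i|x_i|)^m$, which is integrable on $[0,1]$), yields
\[
\mathcal{H}_\infty x^m \;=\; \int_0^1 f(t)^m\,dt \;\ge\; 0.
\]
If this integral vanishes, then $f(t)^m=0$ a.e., hence $f\equiv 0$ on $[0,1]$ by continuity; analyticity on $(-1,1)$ (or simply the uniqueness of power series coefficients, recoverable from $f^{(k)}(0)/k!=x_{k+1}$ since the series can be differentiated termwise) then forces $x_i=0$ for all $i$.

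The main obstacle I anticipate is purely the technical justification for the infinite-dimensional case: namely, carefully interchanging the infinite summation with the integral in the definition \eqref{eq:14} of $\mathcal{H}_\infty x^m$ so as to obtain the clean identity $\mathcal{H}_\infty x^m=\int_0^1 f(t)^m\,dt$, and then passing from ``$f=0$ a.e.\ on $[0,1]$'' to ``all coefficients $x_i$ are zero.'' Both steps are handled by the $l^1$ hypothesis: absolute summability guarantees the dominating function needed for Fubini/dominated convergence, and it also yields continuity (even analyticity at the origin) of $f$, which upgrades the a.e.\ identity to a pointwise one and thereby lets the Taylor-coefficient uniqueness conclude the argument.
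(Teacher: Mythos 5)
Your proposal is correct and follows essentially the same route as the paper: both use the integral representation \eqref{e00} to write $\mathcal{H}_n x^m=\int_0^1\bigl(\sum_{i=1}^n x_i t^{i-1}\bigr)^m dt\ge 0$ for even $m$ and then argue that a vanishing integral forces the generating function, hence all coefficients, to vanish (the paper peels off coefficients iteratively by setting $t=0$ and dividing by $t$, while you invoke the finiteness of polynomial roots and uniqueness of power-series coefficients, a cosmetic difference). Your treatment of the infinite-dimensional case is actually more complete than the paper's, which disposes of it with a single ``similarly''; your uniform convergence and dominated-convergence justifications are exactly the details that sentence suppresses.
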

\begin{proof}
By (\ref{e00}), for each positive integer $n$ and $x\in
\mathbb{R}^n$, we have
$$\begin{aligned}\mathcal{H}_nx^m=&\sum_{i_1,i_2,\cdots,i_m=1}^n\int_0^1t^{i_1+i_2+\cdots +i_m-m}x_{i_1}x_{i_2}\cdots x_{i_m}dt\\
= & \int_0^1\sum_{i_1,i_2,\cdots,i_m=1}^n (\Pi_{j=1}^mt^{i_j-1}x_{i_j})dt\\
=& \int_0^1\left(\sum_{i=1}^nt^{i-1}x_i\right)^mdt\\
\ge&0.
\end{aligned}$$
This shows that $\mathcal{H}_n$ is positive semi-definite.

Now we assume that $\mathcal{H}_n$ is not positive definite.  Then
there exists $\bar x\in \mathbb{R}^n\setminus\{\theta\}$ such that
$\mathcal{H}_n x^m = 0$.  Then from the derivation in the last
paragraph, we see that
\[
\int_0^1 \left(\sum_{i=1}^n t^{i-1}\bar x_i\right)^m dt=0.
\]
By the continuity, we have
\[
\sum_{i=1}^n t^{i-1}\bar x_i \equiv 0 \mbox{ for all } t \in [0,1].
\]
Letting $t=0$, we have $\bar x_1=0$ and so,
\[
t (\bar x_2 +t \bar x_1 +\ldots + t^{n-2} \bar x_n)=0  \mbox{ for
all } t \in [0,1].
\]
So, for all  $t \in (0,1]$, we have
\[
\bar x_2 +t \bar x_1 +\ldots + t^{n-2} \bar x_n=0.
\]
Again by continuity, we see that
\[
\bar x_2 +t \bar x_1 +\ldots + t^{n-2} \bar x_n=0 \mbox{ for all } t
\in [0,1].
\]
Letting $t=0$, we see that $\bar x_2=0$. Repeating this process, we
see that
\[
\bar x_i=0 \mbox{ for all } i=1,\ldots,n.
\]
Therefore, $\bar x=\theta$, which forms a contradiction.   Hence
$\mathcal{H}_n$ is positive definite.

Similarly, we can show that $\mathcal{H}_\infty$ is positive
definite.
\end{proof}

In the remainder of this paper, we will investigate some other nice properties of infinite and finite dimensional Hilbert tensors such as spectral radius and operator norm and so on. \\

 In Section 2, we will prove that the $m$-order infinite dimensional Hilbert  tensor (hypermatrix) $\mathcal{H}_\infty=(\mathcal{H}_{i_1i_2\cdots i_m})$
defines a bounded and positively $(m-1)$-homogeneous operator from $l^1$ into  $l^p$ ($1<p<\infty$).
When $(\mathcal{H}_\infty x^{m-1})^{[\frac1{m-1}]}$ is well defined for all real vector $x\in l^1$,  let \begin{equation}F_\infty x=(\mathcal{H}_\infty x^{m-1})^{[\frac1{m-1}]}\mbox{ and } T_\infty x= \begin{cases}\|x\|_1^{2-m}\mathcal{H}_\infty x^{m-1},\ x\neq\theta\\
\theta,\ \ \ \ \ \ \ \ \ \ \  \ \ \ \ \ \ \ \ \  \ \  x=\theta,\end{cases}\end{equation} where $x^{[\frac1{m-1}]}=(x_1^{\frac1{m-1}},x_2^{\frac1{m-1}},\cdots , x_n^{\frac1{m-1}},\cdots)$ and $\theta$ is zero vector with entries being all $0$. We will show that  $T_\infty$ is a bounded, continuous and positively homogeneous operator from $l^1$ into  $l^p$ ($1<p<\infty$) and  $F_\infty$ is a bounded, continuous and positively homogeneous operator from $l^1$ into  $l^p$ ($m-1<p<\infty$).  Furthermore, their norms are not larger than $\frac{\pi}{\sqrt6}$. \\

In Section 3, we will study the spectral properties of an $m$-order
$n-$dimensional Hilbert tensor  $\mathcal{H}_n$. With the help of
the finite dimensional Hilbert inequality, the largest
$H-$eigenvalue (spectral radius) of Hilbert tensor $\mathcal{H}_n$
is not exactly larger than $n^{m-1}\sin\frac{\pi}{n}$, and the
largest $Z-$eigenvalue ($E-$spectral radius) of $\mathcal{H}_n$  is
smaller than or equal to  $n^{\frac{m}2}\sin\frac{\pi}{n}$.
Furthermore,  the spectral radius of Hilbert tensor $\mathcal{H}_n$
is strictly increasing with respect to the dimensionality $n$ and
its $E$-spectral radius is nondecreasing with respect to the
dimensionality $n$.


\section{\bf Infinite dimensional Hilbert tensors}

 For $0 < p < \infty$, $l^p$ is the space consisting of all sequences $x = (x_i)$ satisfying
 $$\sum_{i=1}^\infty |x_i|^p < \infty.$$
 If $p \geq 1$, then a norm on $l^p$ is  defined by
 $$\|x\|_p = \left(\sum_{i=1}^\infty |x_i|^p\right)^{\frac1p}.$$
 In fact, the space $(l^p,\|\cdot\|_p)$ is a Banach space for $p \geq 1$.

Let $(X,\|\cdot\|_X)$ and $(Y,\|\cdot\|_Y)$ be two Banach space, and $T:X\to Y$ be an operator and $r$ is a real number. $T$ is called \begin{itemize}
\item  {\em $r$-homogeneous} if $T(tx)=t^rTx$ for each $t\in\mathbb{K}$ and all $x\in X$;
\item  {\em positively homogeneous} if $T(tx)=tTx$ for each $t>0$ and all $x\in X$;
\item  {\em bounded} if there is a real number $M>0$ such that  $$\|Tx\|_Y\leq M\|x\|_X, \mbox{ for all }x\in X.$$
\end{itemize}

 Let $T$ be a bounded, continuous and positively homogeneous operator from  $X$ into $Y$. Then the norm of $T$ can be defined by
\begin{equation}\label{eq:21} \|T\|= \sup\{\|Tx\|_Y: \|x\|_X= 1\}.\end{equation}

When $(\mathcal{H}_\infty x^{m-1})^{[\frac1{m-1}]}$ is well defined for all real vector $x\in l^1$,  let \begin{equation}\label{eq:22}F_\infty x=\left(\mathcal{H}_\infty x^{m-1}\right)^{[\frac1{m-1}]}\end{equation}  and \begin{equation}\label{eq:23}T_\infty x= \begin{cases}\left\|x\right\|_1^{2-m}\mathcal{H}_\infty x^{m-1},\ x\neq\theta\\
\theta,\ \ \ \ \ \ \ \ \ \ \  \ \ \ \ \ \ \ \ \  \ \  x=\theta,\end{cases}\end{equation} where $x^{[\frac1{m-1}]}=(x_1^{\frac1{m-1}},x_2^{\frac1{m-1}},\cdots , x_n^{\frac1{m-1}},\cdots)$ and $\theta$ is zero vector with entries being all $0$. Clearly, both $F_\infty$ and $T_\infty$ are continuous and positively homogeneous. With the help of the well known series
$$\sum\limits_{i=1}^{\infty}\frac1{i^p}<\infty\mbox{ for $\infty>p>1$ and }\sum\limits_{i=1}^{\infty}\frac1{i^2}=\frac{\pi^2}{6},$$ now we discuss properties of  the infinite dimensional Hilbert tensor.
\begin{thm}\label{th:21} Let $F_\infty$ and $T_\infty$ be  defined by the equations (\ref{eq:22}) and (\ref{eq:23}), respectively. Then \begin{itemize}
\item[(i)] if $x\in l^1$, then $T_\infty x\in l^p$ for $1<p<\infty$;
\item[(ii)] if $x\in l^1$, then $F_\infty x\in l^p$ for $m-1<p<\infty$.
\end{itemize}
Furthermore, $T_\infty$ is a bounded, continuous and positively homogeneous operator from $l^1$ into  $l^p$ ($1<p<\infty$) and  $F_\infty$ is a bounded, continuous and positively homogeneous operator from $l^1$ into  $l^p$ ($m-1<p<\infty$). In particular,
 $$\|T_\infty\|=\sup_{\|x\|_1= 1}\|T_\infty x \|_2\leq \frac{\pi}{\sqrt6}$$ and
 $$\|F_\infty\|=\sup_{\|x\|_1= 1}\|F_\infty x \|_{2(m-1)}\leq \frac{\pi}{\sqrt6}.$$
 \end{thm}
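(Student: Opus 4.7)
The plan is to reduce everything to a single pointwise estimate obtained from the integral identity (\ref{e00}). For $x\in l^1$ set $g_x(t):=\sum_{j=1}^\infty t^{j-1}x_j$, which converges absolutely and uniformly on $[0,1]$ with $|g_x(t)|\le\|x\|_1$. Substituting (\ref{e00}) into (\ref{eq:13}) and interchanging the iterated sum with the integral by Fubini--Tonelli (justified since the total absolute mass is at most $\|x\|_1^{m-1}/i<\infty$) gives the compact representation
\[
(\mathcal{H}_\infty x^{m-1})_i=\int_0^1 t^{i-1}\,g_x(t)^{m-1}\,dt,
\]
whence the master pointwise estimate
\[
\bigl|(\mathcal{H}_\infty x^{m-1})_i\bigr|\le \|x\|_1^{m-1}\int_0^1 t^{i-1}\,dt=\frac{\|x\|_1^{m-1}}{i}.
\]
Every subsequent step is a corollary of this one inequality.

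Inserting the master estimate into (\ref{eq:22}) and (\ref{eq:23}) and summing coordinatewise yields
\[
\|T_\infty x\|_p^p\le \|x\|_1^{p(2-m)}\sum_{i\ge 1}\frac{\|x\|_1^{p(m-1)}}{i^p}=\|x\|_1^p\sum_{i\ge 1}\frac1{i^p},
\]
\[
\|F_\infty x\|_p^p=\sum_{i\ge 1}\bigl|(\mathcal{H}_\infty x^{m-1})_i\bigr|^{p/(m-1)}\le \|x\|_1^p\sum_{i\ge 1}\frac1{i^{p/(m-1)}}.
\]
The first series converges for $p>1$, which proves (i) with $\|T_\infty x\|_p\le \|x\|_1(\sum_i i^{-p})^{1/p}$; the second converges for $p>m-1$, which proves (ii) with the analogous bound. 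Specializing to $p=2$ for $T_\infty$ and $p=2(m-1)$ for $F_\infty$ and invoking the Euler identity $\sum_i i^{-2}=\pi^2/6$ delivers the advertised upper bound $\pi/\sqrt 6$ in each case (for $m\ge 3$ one also uses $(\pi^2/6)^{1/(2(m-1))}\le \pi/\sqrt 6$ since $\pi^2/6>1$).

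Positive homogeneity is immediate from (\ref{eq:22}) and (\ref{eq:23}): the factor $\|x\|_1^{2-m}$ in $T_\infty$ cancels the $(m-1)$-homogeneity of $x\mapsto \mathcal{H}_\infty x^{m-1}$, and the coordinatewise $(m-1)$th root in $F_\infty$ does the same. For continuity, the real work is to show that $x\mapsto\mathcal{H}_\infty x^{m-1}$ is continuous from $l^1$ into $l^p$. Factoring $g_x(t)^{m-1}-g_y(t)^{m-1}=(g_x(t)-g_y(t))\sum_{k=0}^{m-2}g_x(t)^k g_y(t)^{m-2-k}$, bounding $|g_x-g_y|\le\|x-y\|_1$ and $|g_x|,|g_y|\le M:=\max(\|x\|_1,\|y\|_1)$, and re-running the master argument upgrades the pointwise estimate to the local Lipschitz bound
\[
\bigl\|\mathcal{H}_\infty x^{m-1}-\mathcal{H}_\infty y^{m-1}\bigr\|_p\le (m-1)M^{m-2}\|x-y\|_1\Bigl(\sum_{i\ge 1}i^{-p}\Bigr)^{1/p}.
\]
From this, continuity of $T_\infty$ on $l^1\setminus\{\theta\}$ follows by composing with the continuous scalar $x\mapsto\|x\|_1^{2-m}$, and continuity at $\theta$ follows directly from $\|T_\infty x\|_p\le \|x\|_1(\sum_i i^{-p})^{1/p}\to 0$; continuity of $F_\infty$ is identical, composing with the coordinatewise root $s\mapsto s^{1/(m-1)}$, which is H\"older continuous on bounded subsets of $\mathbb{R}$ (nonnegativity of $(\mathcal{H}_\infty x^{m-1})_i$ when $m-1$ is even is itself provided by the integral representation). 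The step I expect to be the most delicate is the continuity of $T_\infty$ at $\theta$ when $m\ge 3$, because the scalar factor $\|x\|_1^{2-m}$ blows up there; the resolution is not to analyze the two factors separately but to rely on the direct norm bound, in which the blow-up has already been absorbed by the master estimate.
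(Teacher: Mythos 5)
Your proposal is correct, and its quantitative core coincides with the paper's: both proofs rest on the single pointwise estimate $|(\mathcal{H}_\infty x^{m-1})_i|\le \|x\|_1^{m-1}/i$ followed by the same coordinatewise $\ell^p$ summation with $p=2$ for $T_\infty$ and $p=2(m-1)$ for $F_\infty$. The difference is how you reach that estimate: the paper simply bounds the denominator below by $i$ using $i_2,\dots,i_m\ge 1$, whereas you route through the integral representation $(\mathcal{H}_\infty x^{m-1})_i=\int_0^1 t^{i-1}g_x(t)^{m-1}\,dt$ with $|g_x|\le\|x\|_1$. That is heavier machinery for the same inequality, but it buys you two things the paper does not supply: the sign information $(\mathcal{H}_\infty x^{m-1})_i\ge 0$ when $m-1$ is even (relevant to $F_\infty$ being well defined), and the local Lipschitz bound via the factorization of $g_x^{m-1}-g_y^{m-1}$, which actually proves the continuity of $T_\infty$ and $F_\infty$ that the paper only asserts (``Clearly, both $F_\infty$ and $T_\infty$ are continuous\dots''). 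Two small remarks: your constant for $F_\infty$, namely $\bigl(\sum_i i^{-p/(m-1)}\bigr)^{1/p}$, is the correct one and is sharper than the paper's $\bigl(\sum_i i^{-p/(m-1)}\bigr)^{(m-1)/p}$ (a harmless exponent slip there), and you rightly note that at $p=2(m-1)$ it equals $(\pi^2/6)^{1/(2(m-1))}\le\pi/\sqrt6$; and in the continuity step for $F_\infty$ the cleanest justification is the subadditivity $|a^{1/(m-1)}-b^{1/(m-1)}|\le |a-b|^{1/(m-1)}$ applied coordinatewise, which converts convergence of $\mathcal{H}_\infty x_n^{m-1}$ in $l^{p/(m-1)}$ into convergence of $F_\infty x_n$ in $l^p$, rather than H\"older continuity on bounded sets.
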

\begin{proof} For $x\in l^1$,
$$\begin{aligned}|(\mathcal{H}_\infty x^{m-1})_i|=&\lim_{n\to\infty}\left|\sum_{i_2,\cdots,i_m=1}^n\frac{x_{i_2}\cdots x_{i_m}}{i+i_2+\cdots+i_m-m+1}\right|\\
\leq&\lim_{n\to\infty} \sum_{i_2,\cdots,i_m=1}^n\frac{|x_{i_2}\cdots x_{i_m}|}{i+\underbrace{1+\cdots+1}_{m-1}-m+1}\\
=&\frac1i\lim_{n\to\infty}\sum_{i_2,\cdots,i_m=1}^n|x_{i_2}||x_{i_3}|\cdots |x_{i_m}|\\
=&\frac1i\lim_{n\to\infty}\left(\sum_{k=1}^n|x_k|\right)^{m-1}\\
=&\frac1i\left(\sum_{k=1}^\infty|x_k|\right)^{m-1}\\
=&\frac1i\|x\|_1^{m-1}.
\end{aligned}$$
Then (i) for $p>1$, it follows from the definition of  $T_\infty$ that
$$\begin{aligned}\sum_{i=1}^{\infty}|(T_\infty x)_i|^p=&\sum_{i=1}^{\infty}|(\|x\|_1^{2-m}\mathcal{H}_\infty x^{m-1})_i|^p\\
=&\|x\|_1^{(2-m)p}\sum_{i=1}^{\infty}|(\mathcal{H}_\infty x^{m-1})_i|^p\\
\leq&\|x\|_1^{(2-m)p}\sum_{i=1}^{\infty}(\frac1i\|x\|_1^{m-1})^p\\
=&\|x\|_1^p\sum_{i=1}^{\infty}\frac1{i^p}<\infty
\end{aligned}$$
since  $\sum_{i=1}^{\infty}\frac1{i^p}<\infty$ for $p>1$, and thus  $T_\infty x\in l^p$ for all $x\in l^1$. Moreover, we also have
\begin{equation}\label{eq:24}
\|T_\infty x\|_p=\left(\sum_{i=1}^{\infty}|(T_\infty x)_i|^p\right)^\frac1p\leq M\|x\|_1,
\end{equation} where $M=\left(\sum\limits_{i=1}^{\infty}\frac1{i^p}\right)^\frac1p>0$. So, $T_\infty$ is a bounded operator from $l^1$ into $l^p$ ($1<p<\infty$).  In particular, take $p=2$, $M=\left(\sum\limits_{i=1}^{\infty}\frac1{i^2}\right)^\frac12=\frac{\pi}{\sqrt6}.$ It follows from (\ref{eq:21}) and (\ref{eq:24}) that
$$\|T_\infty\|=\sup_{\|x\|_1= 1}\|T_\infty x \|_2\leq\frac{\pi}{\sqrt6}.$$
(ii) for $p>m-1$, it follows from the definition of  $F_\infty$ that
$$\begin{aligned}\sum_{i=1}^{\infty}|(F_\infty x)_i|^p=&\sum_{i=1}^{\infty}|(\mathcal{H}_\infty x^{m-1})_i|^{\frac{p}{m-1}}\\
\leq&\sum_{i=1}^{\infty}(\frac1i\|x\|_1^{m-1})^{\frac{p}{m-1}}\\
=&\|x\|_1^p\sum_{i=1}^{\infty}\frac1{i^{\frac{p}{m-1}}}<\infty
\end{aligned}$$
since  $\sum_{i=1}^{\infty}\frac1{i^{\frac{p}{m-1}}}<\infty$ for $p>m-1$, and hence  $F_\infty x\in l^p$ for all $x\in l^1$. Moreover, we also have
\begin{equation}\label{eq:25}
\|T_\infty x\|_p=\left(\sum_{i=1}^{\infty}|(F_\infty x)_i|^p\right)^\frac1p\leq C\|x\|_1,
\end{equation} where $C=\left(\sum\limits_{i=1}^{\infty}\frac1{i^{\frac{p}{m-1}}}\right)^{\frac{m-1}{p}}>0$. So, $F_\infty$ is a bounded operator from $l^1$ into $l^p$ ($m-1<p<\infty$).  Similarly, take $p=2(m-1)$, $C=\frac{\pi}{\sqrt6}.$ It follows from (\ref{eq:21}) and (\ref{eq:25}) that
$$\|F_\infty\|=\sup_{\|x\|_1= 1}\|T_\infty x \|_{2(m-1)}\leq\frac{\pi}{\sqrt6}.$$
 This completes the proof.
  \end{proof}
 It follows from the definition (\ref{eq:13}) that $\mathcal{H}_\infty x^{m-1}$ is continuous, positively $(m-1)$-homogeneous, and so from the proof of Theorem \ref{th:21}, it also is bounded.
\begin{thm}\label{th:22} Let $\mathcal{H}_\infty$ be an $m$-order infinite dimensional Hilbert tensor and $f(x)=\mathcal{H}_\infty x^{m-1}$. Then
 $f$ is a bounded, continuous and positively $(m-1)$-homogeneous operator from $l^1$ into  $l^p$ ($1<p<\infty$).
 \end{thm}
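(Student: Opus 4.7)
The plan is to verify the three claimed properties — positive $(m-1)$-homogeneity, boundedness, and continuity — by recycling the componentwise estimate already obtained in the proof of Theorem~\ref{th:21}. Homogeneity is immediate: in the defining series (\ref{eq:13}) every monomial $x_{i_2}\cdots x_{i_m}$ is $(m-1)$-homogeneous in $x$, so substituting $tx$ for $t>0$ factors out $t^{m-1}$ and yields $f(tx)=t^{m-1}f(x)$ coordinatewise.

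For boundedness I would reuse the bound $|(\mathcal{H}_\infty x^{m-1})_i|\le \tfrac1i\,\|x\|_1^{m-1}$, which was derived at the start of the proof of Theorem~\ref{th:21} by majorizing each denominator by $i$. Taking $p$-th powers and summing over $i$ gives
\[
\|f(x)\|_p^p \;\le\; \|x\|_1^{(m-1)p}\sum_{i=1}^{\infty}\frac{1}{i^p},
\]
which is finite for every $p>1$. This simultaneously shows that $f(x)\in l^p$ and yields the homogeneous-operator bound $\|f(x)\|_p\le M\,\|x\|_1^{m-1}$ with $M=\bigl(\sum_i i^{-p}\bigr)^{1/p}$. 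In particular, $f$ sends bounded subsets of $l^1$ into bounded subsets of $l^p$.

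The main obstacle is continuity, because $f$ is genuinely nonlinear. My plan is to establish local Lipschitz continuity via the telescoping product identity
\[
\prod_{k=2}^{m} x_{i_k} - \prod_{k=2}^{m} y_{i_k} \;=\; \sum_{j=2}^{m}\Bigl(\prod_{k=2}^{j-1} x_{i_k}\Bigr)(x_{i_j}-y_{i_j})\Bigl(\prod_{k=j+1}^{m} y_{i_k}\Bigr),
\]
(with empty products equal to $1$) inserted coordinatewise into $(\mathcal{H}_\infty x^{m-1})_i - (\mathcal{H}_\infty y^{m-1})_i$. Applying the same denominator majorization $\tfrac{1}{i+i_2+\cdots+i_m-m+1}\le \tfrac1i$ as in Theorem~\ref{th:21} then produces
\[
|(f(x)-f(y))_i| \;\le\; \tfrac1i\,\|x-y\|_1\sum_{j=2}^{m}\|x\|_1^{j-2}\|y\|_1^{m-j},
\]
and a final summation of $p$-th powers with the $\sum i^{-p}$ bound delivers $\|f(x)-f(y)\|_p \le M\,\|x-y\|_1\,R(\|x\|_1,\|y\|_1)$ with $R$ a polynomial in two variables. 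This is uniform continuity on every bounded subset of $l^1$, which in particular implies the continuity of $f\colon l^1\to l^p$ and finishes the argument.
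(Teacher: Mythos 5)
Your proposal is correct, and for homogeneity and boundedness it coincides with what the paper does: the paper's entire justification of this theorem is the one sentence preceding it, which asserts that continuity and positive $(m-1)$-homogeneity ``follow from the definition (\ref{eq:13})'' and that boundedness follows from the componentwise estimate $|(\mathcal{H}_\infty x^{m-1})_i|\le \tfrac1i\|x\|_1^{m-1}$ established in the proof of Theorem \ref{th:21} --- exactly the estimate you reuse. Where you genuinely diverge is on continuity: the paper offers no argument at all that $f$ is continuous as a map into the $l^p$ \emph{norm} topology (coordinatewise continuity of each component series is not the same thing), whereas your telescoping decomposition of $\prod x_{i_k}-\prod y_{i_k}$, combined with the same denominator majorization $\tfrac{1}{i+i_2+\cdots+i_m-m+1}\le\tfrac1i$, yields the quantitative bound $\|f(x)-f(y)\|_p\le M\,\|x-y\|_1\sum_{j=2}^{m}\|x\|_1^{j-2}\|y\|_1^{m-j}$, i.e.\ local Lipschitz continuity on bounded sets. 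This is a strictly stronger conclusion and actually closes a step the paper leaves implicit; the only cost is the extra bookkeeping of the telescoping identity. One small point of presentation: since $f$ is $(m-1)$-homogeneous, the bound you obtain is $\|f(x)\|_p\le M\|x\|_1^{m-1}$ rather than the linear bound in the paper's literal definition of ``bounded''; this is the natural notion for a degree-$(m-1)$ homogeneous operator (boundedness on the unit sphere) and is clearly what the paper intends, so no correction is needed, but it is worth stating explicitly that this is the sense in which boundedness is being verified.
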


{\bf Remark 1.} It is well known that Hilbert matrix $H_\infty$ is a bounded linear operator from $l^2$ into $l^2$ and $$\|H_\infty\|_2=\sup_{\|x\|_2= 1}\|H_\infty x \|_2=\pi.$$
For more details, see \cite{C1983}. Then if the Hilbert matrix $H_\infty$ is regarded as a bounded linear operator from $l^1$ into  $l^2$,  whether $\|H_\infty\|=\sup\limits_{\|x\|_1= 1}\|H_\infty x \|_2$ is exactly equal to $\frac{\pi}{\sqrt6}$ or another number? Furthermore, may the values of $\|T_\infty\|=\sup\limits_{\|x\|_1= 1}\|T_\infty x \|_2$ and $\|F_\infty\|=\sup\limits_{\|x\|_1= 1}\|F_\infty x \|_{2(m-1)}$ be worked out exactly?

\section{\bf Finite dimensional Hilbert tensors}

For $x\in \mathbb{R}^n$ and $\infty>p\geq1$, it is known well that $$\|x\|_p=\left(\sum_{i=1}^n|x_i|^p\right)^{\frac1p}$$ is the norm defined on $\mathbb{R}^n$ for each $p\geq1$ and  \begin{equation}\label{eq:31}\|x\|_p\leq\|x\|_r\leq n^{\frac1r-\frac1p}\|x\|_p\mbox{ for }p>r.\end{equation}  Then for a continuous, positively homogeneous $T:\mathbb{R}^n\to \mathbb{R}^n$,  it is obvious that $$\|T\|_p=\max_{\|x\|_p=1}\|Tx\|_p$$ is the operator norm of $T$ for each $p\geq1$ (Song and Qi \cite{SQ13}).
When $(\mathcal{H}_n x^{m-1})^{[\frac1{m-1}]}$ is well defined for all $x\in \mathbb{R}^n$,  let $$F_n x=(\mathcal{H}_n x^{m-1})^{[\frac1{m-1}]}$$ and \begin{equation}\label{eq:32}T_n x= \begin{cases}\|x\|_2^{2-m}\mathcal{H}_n x^{m-1},\ x\neq\theta\\
\theta,\ \ \ \ \ \ \ \ \ \ \  \ \ \ \ \ \ \ \ \  \ x=\theta,\end{cases}\end{equation} where $x^{[\frac1{m-1}]}=(x_1^{\frac1{m-1}},x_2^{\frac1{m-1}},\cdots , x_n^{\frac1{m-1}})^T$ and $\theta=(0,0,\cdots,0)^T$. Clearly, both $F_n$ and $T_n$ are continuous and positively homogeneous.
The following Hilbert inequality is well known (Frazer \cite{F1946}).
 \begin{lem} \label{le:31} Let $x=(x_1,x_2,\cdots,x_n)^T\in \mathbb{R}^n$. Then \begin{equation}\label{eq:33}\sum_{i=1}^n\sum_{j=1}^n\frac{|x_i||x_j|}{i+j-1}\leq \left(n\sin\frac{\pi}{n}\right)\sum_{k=1}^nx_k^2=\|x\|_2^2n\sin\frac{\pi}{n}.\end{equation}
 \end{lem}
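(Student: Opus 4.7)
The plan is to apply the classical Schur test, reducing the bilinear bound to a pointwise estimate on a weighted row sum of $H_n$. For any strictly positive weight $w:\{1,\dots,n\}\to(0,\infty)$, the Cauchy--Schwarz inequality applied to the splitting
$$\frac{|x_i|\,|x_j|}{i+j-1} \;=\; \frac{|x_i|}{\sqrt{i+j-1}}\sqrt{\tfrac{w(j)}{w(i)}}\;\cdot\;\frac{|x_j|}{\sqrt{i+j-1}}\sqrt{\tfrac{w(i)}{w(j)}},$$
combined with the $i\leftrightarrow j$ symmetry of the kernel, collapses both Cauchy--Schwarz factors into the same expression, yielding
$$\sum_{i,j=1}^n \frac{|x_i|\,|x_j|}{i+j-1} \;\le\; \sum_{i=1}^n x_i^2\, K_w(i), \qquad K_w(i):=\frac{1}{w(i)}\sum_{j=1}^n\frac{w(j)}{i+j-1}.$$
Hence the lemma reduces to exhibiting a single weight $w$ for which $\max_{1\le i\le n}K_w(i)\le n\sin(\pi/n)$.

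To pick this weight, I would take my cue from the classical proof of the infinite-dimensional Hilbert inequality, where $w(j)=j^{-1/2}$ produces the sharp constant $\pi$. In the finite-dimensional setting the natural analogue is a trigonometric weight adapted to the $n$-point grid, for instance $w(j)=\sin\frac{(2j-1)\pi}{2n}$ or a cotangent variant arising from the diagonalisation of a circulant companion of $H_n$. With such a choice, the inner sum defining $K_w(i)$ can be re-expressed through the integral representation $\tfrac{1}{i+j-1}=\int_0^1 t^{i+j-2}\,dt$ and the closed-form evaluation of the trigonometric--geometric series $\sum_{j=1}^n w(j)\,t^{j-1}$, reducing the task to bounding a single explicit integral by $n\sin(\pi/n)$, uniformly in $i$.

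The main obstacle is precisely this sharp control on $K_w(i)$: since $n\sin(\pi/n)<\pi$ for every finite $n$ while any trivial truncation argument would only recover the infinite-dimensional constant $\pi$, the estimate must genuinely exploit the finite-$n$ cancellation coming from the chosen trigonometric weight. As a parallel backup approach I would pursue a purely spectral route: because $H_n$ is symmetric and positive definite, the inequality $|x|^T H_n|x|\le \lambda_{\max}(H_n)\,\|x\|_2^2$ reduces the lemma to showing $\lambda_{\max}(H_n)\le n\sin(\pi/n)$. This can be attempted by entrywise or Loewner-ordering comparison with an explicit circulant/Toeplitz matrix whose eigenvalues are known trigonometric expressions, and extracting the upper bound $n\sin(\pi/n)$ via Courant--Fischer or Cauchy interlacing; the quadratic-form inequality then delivers the lemma, with the right-hand side interpreted via $\||x|\|_2^2=\|x\|_2^2$.
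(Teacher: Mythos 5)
The paper gives no proof of this lemma at all: it is quoted as a known result of Frazer with a citation to \cite{F1946}, so your argument has to stand entirely on its own. Its first reduction is sound. The weighted Cauchy--Schwarz (Schur test) step, using the symmetry of the kernel, correctly gives
$$\sum_{i,j=1}^n\frac{|x_i|\,|x_j|}{i+j-1}\;\le\;\sum_{i=1}^n x_i^2\,K_w(i),\qquad K_w(i)=\frac{1}{w(i)}\sum_{j=1}^n\frac{w(j)}{i+j-1},$$
and the observation that the lemma is equivalent to $\lambda_{\max}(H_n)\le n\sin(\pi/n)$ is also correct, since $H_n$ is symmetric with positive entries and the left-hand side is the quadratic form evaluated at $|x|$.

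The gap is that everything after this reduction is a list of candidate ideas rather than an argument. You never fix a weight $w$ and verify the uniform bound $\max_{1\le i\le n}K_w(i)\le n\sin(\pi/n)$; you identify this verification yourself as ``the main obstacle,'' but that obstacle \emph{is} the theorem --- the reduction itself is routine. Two further points make the gap substantive rather than cosmetic. First, by the Collatz--Wielandt characterization, $\min_{w>0}\max_i K_w(i)=\lambda_{\max}(H_n)$, attained only at the Perron vector of $H_n$, which has no usable closed form; so the method succeeds only if some explicit weight admits both a computable row sum and enough slack to reach $n\sin(\pi/n)$, and neither of your candidates ($w(j)=\sin\frac{(2j-1)\pi}{2n}$ or a cotangent variant) is accompanied by such a computation or any evidence that it closes. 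Second, the proposed spectral backup (Loewner or entrywise comparison with a circulant/Toeplitz matrix plus interlacing) is equally unspecified: no comparison matrix is exhibited, and entrywise domination of kernels does not by itself control $\lambda_{\max}$ for signed test vectors without the passage to $|x|$ that you only use in the other branch. If you want to complete the proof along classical lines, the natural starting point is the identity $\sum_{i,j}x_ix_j/(i+j-1)=\int_0^1\bigl(\sum_i x_i t^{i-1}\bigr)^2dt$ together with $\int_0^\infty\frac{dt}{1+t^n}=\frac{\pi/n}{\sin(\pi/n)}$, which is where the constant $n\sin(\pi/n)$ actually originates; as written, the proposal does not yet establish the lemma.
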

Recall that $\lambda\in \mathbb{C}$ is called an {\em eigenvalue of $\mathcal{H}_n$}, if there is a vector $x\in \mathbb{R}^n\setminus\{\theta\}$ such that
\begin{equation}\label{eq:34}\mathcal{H}_nx^{m-1}=\lambda x^{[m-1]}, \end{equation}
where $x^{[m-1]}=(x_1^{m-1},\cdots , x_n^{m-1})^T$, and call $x$ an {\em eigenvector} associated with  $\lambda$. We call such an eigenvalue {\em H-eigenvalue} if it is real and has a real eigenvector $x$, and call such a real eigenvector $x$ an {\em H-eigenvector}.
A number $\mu\in \mathbb{C}$ is called {\em $E$-eigenvalue of $\mathcal{H}_n$}, if there is a  vector $x\in \mathbb{R}^n\setminus\{\theta\}$ such that
 \begin{equation}\label{eq:35}\begin{cases}\mathcal{H}_nx^{m-1}=\mu x \\
 x^Tx=1,\end{cases} \end{equation}
 and call a  vector $x$ an {\em $E$-eigenvector} associated with $\mu$. If $x$ is real, then $\mu$ is also real. In this case, $\mu$ and $x$ are called {\em $Z$-eigenvalue} of  $\mathcal{H}_n$ and
  {\em $Z$-eigenvector}  associated with $\mu$, respectively. These concepts were first introduced by Qi \cite{LQ1} for  the
higher order symmetric tensors. Lim \cite{LL} independently
introduced the notion of eigenvalue for higher order tensors but restricted $x$ to be a real vector and $\lambda$ to be a real number.

Since Hilbert tensor is positive (all entries are positive) and symmetric, then the following conclusions (i) were easily obtained from Chang, Pearson and Zhang \cite{CPT1}, Qi \cite{LQ13}, Song and Qi \cite{SQ14} and Yang and Yang \cite{YY10,YY11}, and the conclusions (ii) can be obtained from Chang, Pearson and Zhang \cite{CPZ13} and Song and Qi \cite{SQ13}.
\begin{lem}\label{le:32} Let $\rho(F_n)$ and $\rho(T_n)$ respectively denote the largest modulus
of the eigenvalues of operators $F_n$ and $T_n$.   Then
\begin{itemize}
\item[(i)] $(\rho(F_n))^{m-1}$ is
 a positive H-eigenvalue of $\mathcal{H}_n$  with a positive $H-$eigenvector, i.e. all components are positive and \begin{equation}\label{eq:36} \rho(F_n)^{m-1}=\max\{\mathcal{H}_nx^m; x\in\mathbb{R}^n_+,\|x\|_m=1\};\end{equation}
\item[(ii)] $\rho(T_n)$ is
a positive $Z-$eigenvalue of $\mathcal{H}_n$  with a nonnegative $Z-$eigenvector and \begin{equation}\label{eq:37} \rho(T_n)=\max\{\mathcal{H}_nx^m; x\in\mathbb{R}^n,\|x\|_2=1\}.\end{equation}
\end{itemize}
\end{lem}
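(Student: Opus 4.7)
The plan is to deduce both parts of Lemma \ref{le:32} from the Perron--Frobenius theory for positive symmetric tensors, using the fact that $\mathcal{H}_n$ has all entries strictly positive and is symmetric in all indices. The key observation is that the operator-eigenvalue problems for $F_n$ and $T_n$ translate into the tensor-eigenvalue problems (\ref{eq:34}) and (\ref{eq:35}), so that the spectral radii $\rho(F_n)$ and $\rho(T_n)$ inherit variational characterizations from known maximum-principle formulas.

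For part (i), I would first make the correspondence explicit: since $F_n x = (\mathcal{H}_n x^{m-1})^{[1/(m-1)]}$ is continuous and positively homogeneous of degree $1$, an eigenvalue equation $F_n x = \mu x$ (with $x \in \mathbb{R}^n_+\setminus\{\theta\}$) is equivalent, after raising both sides componentwise to the $(m-1)$-th power, to $\mathcal{H}_n x^{m-1} = \mu^{m-1} x^{[m-1]}$, i.e.\ $\mu^{m-1}$ is an H-eigenvalue of $\mathcal{H}_n$ with eigenvector $x$. I would then invoke the Perron--Frobenius theorem for positive (and hence irreducible, weakly primitive) symmetric tensors, as established by Chang--Pearson--Zhang \cite{CPT1}, Yang--Yang \cite{YY10,YY11}, Qi \cite{LQ13}, and Song--Qi \cite{SQ14}: it guarantees that the largest modulus eigenvalue is attained by a strictly positive H-eigenvector, is itself positive and simple, and admits the Rayleigh-type representation
\[
\rho(F_n)^{m-1}=\max\{\mathcal{H}_n x^m : x\in\mathbb{R}^n_+,\ \|x\|_m=1\},
\]
which is exactly (\ref{eq:36}). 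Positivity of the maximum follows because $\mathcal{H}_n$ has strictly positive entries, so any nonzero nonnegative $x$ gives $\mathcal{H}_n x^m>0$.

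For part (ii), I would verify first that $T_n$, as defined in (\ref{eq:32}), is genuinely positively homogeneous of degree $1$: for $t>0$ and $x\neq\theta$,
\[
T_n(tx)=\|tx\|_2^{2-m}\mathcal{H}_n(tx)^{m-1}=t^{2-m}t^{m-1}\|x\|_2^{2-m}\mathcal{H}_n x^{m-1}=t\,T_n x.
\]
Thus $T_n x=\mu x$ with $\|x\|_2=1$ is equivalent to $\mathcal{H}_n x^{m-1}=\mu x$ together with $\|x\|_2=1$, i.e.\ $\mu$ is a Z-eigenvalue with Z-eigenvector $x$. Then the Z-eigenvalue Perron--Frobenius result for positive symmetric tensors (Chang--Pearson--Zhang \cite{CPZ13}, Song--Qi \cite{SQ13}) gives that $\rho(T_n)$ is a positive Z-eigenvalue realized by a nonnegative Z-eigenvector, with the variational formula (\ref{eq:37}).

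The main task is really bookkeeping: identifying the appropriate cited theorem with the correct normalization and checking the homogeneity degrees so that operator eigenvalues of $F_n$ and $T_n$ become, respectively, H-eigenvalues (raised to the $1/(m-1)$) and Z-eigenvalues of $\mathcal{H}_n$. There is no substantive analytic obstacle, since strict positivity of all entries of $\mathcal{H}_n$ immediately places it in the class of tensors covered by the cited Perron--Frobenius theorems; the only mild subtlety is confirming, using $\mathcal{H}_n x^m>0$ for nonnegative nonzero $x$, that the maxima in (\ref{eq:36}) and (\ref{eq:37}) are strictly positive, so that $\rho(F_n)>0$ and $\rho(T_n)>0$ as asserted.
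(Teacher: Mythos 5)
Your proposal is correct and follows essentially the same route as the paper, which states Lemma \ref{le:32} without an explicit proof and simply derives it from the Perron--Frobenius theory for positive symmetric tensors in the cited works of Chang--Pearson--Zhang, Yang--Yang, Qi, and Song--Qi. Your added bookkeeping---translating eigenvalues of $F_n$ and $T_n$ into H-eigenvalues (via the $(m-1)$-th power) and Z-eigenvalues of $\mathcal{H}_n$, and noting that strict positivity of the entries makes the maxima positive---is exactly the translation the paper leaves implicit, so there is no substantive difference in approach.
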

Now we give the upper bounded of the eigenvalues of operators $F_n$ and $T_n$.
\begin{thm} \label{th:33} Let $\mathcal{H}_n$ be an $m$-order $n$-dimensional Hilbert tensor. Then
\begin{itemize}
\item[(i)]  for all $E$-eigenvalues (Z-eigenvalues) $\mu$ of Hilbert tensor $\mathcal{H}_n$, $$|\mu|\leq \rho(T_n)\leq n^{\frac{m}2}\sin\frac{\pi}{n};$$
\item[(ii)] for all eigenvalues (H-eigenvalues) $\lambda$ of Hilbert tensor $\mathcal{H}_n$,  $$|\lambda|\leq \rho(F_n)^{m-1}\leq n^{m-1}\sin\frac{\pi}{n}.$$
\end{itemize}
\end{thm}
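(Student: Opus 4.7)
The plan is to reduce both bounds to the order-two Hilbert inequality of Lemma~\ref{le:31} by peeling two indices out of the $m$-fold sum defining $\mathcal{H}_n x^m$, and then to pass from the $l^2$-norm to the relevant normalization via the inclusion inequalities (\ref{eq:31}). Lemma~\ref{le:32} already provides the variational characterizations $\rho(T_n)=\max\{\mathcal{H}_n x^m:\|x\|_2=1\}$ and $\rho(F_n)^{m-1}=\max\{\mathcal{H}_n x^m:x\in\mathbb{R}^n_+,\ \|x\|_m=1\}$, and the Perron-type estimates $|\mu|\leq\rho(T_n)$ and $|\lambda|\leq\rho(F_n)^{m-1}$ are consequences of the results cited just before that lemma. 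Since the entries of $\mathcal{H}_n$ are positive, $|\mathcal{H}_n x^m|\leq\mathcal{H}_n|x|^m$ for every $x\in\mathbb{R}^n$, so in estimating both maxima we may assume $x\geq 0$.

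The crux of the argument is the elementary decoupling
\[
\frac{1}{i_1+i_2+\cdots+i_m-m+1}\;\leq\;\frac{1}{i_1+i_2-1},
\]
which holds because $i_3,\ldots,i_m\geq 1$. Applied to $y\in\mathbb{R}^n_+$, this separates the $m$-fold sum, and combining the result with Lemma~\ref{le:31} yields the master bound
\[
\mathcal{H}_n y^m \;\leq\; \Bigl(\sum_{i_1,i_2=1}^n\frac{y_{i_1}y_{i_2}}{i_1+i_2-1}\Bigr)\,\|y\|_1^{m-2} \;\leq\; n\sin\frac{\pi}{n}\;\|y\|_2^{2}\;\|y\|_1^{m-2}.
\]

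To finish (i), take $\|y\|_2=1$; then (\ref{eq:31}) gives $\|y\|_1\leq n^{1/2}$, hence $\mathcal{H}_n y^m\leq n\sin(\pi/n)\cdot n^{(m-2)/2}=n^{m/2}\sin(\pi/n)$. For (ii), take $\|y\|_m=1$ with $y\geq 0$; then (\ref{eq:31}) gives $\|y\|_2\leq n^{1/2-1/m}$ and $\|y\|_1\leq n^{1-1/m}$, and the exponent tally $1+2(\tfrac12-\tfrac1m)+(m-2)(1-\tfrac1m)=m-1$ produces $\mathcal{H}_n y^m\leq n^{m-1}\sin(\pi/n)$. The only step calling for care is the exponent bookkeeping in (ii); the decoupling inequality displayed above is the natural device for reducing an order-$m$ form to an order-two inequality, and once it is in place the remainder of the argument is mechanical.
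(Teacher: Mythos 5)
Your proposal is correct and follows essentially the same route as the paper: the same decoupling $\frac{1}{i_1+\cdots+i_m-m+1}\leq\frac{1}{i_1+i_2-1}$ reduces the $m$-form to Hilbert's inequality of Lemma~\ref{le:31}, yielding the bound $\left(n\sin\frac{\pi}{n}\right)\|x\|_2^2\|x\|_1^{m-2}$, after which the norm comparisons (\ref{eq:31}) and the variational characterizations of Lemma~\ref{le:32} finish both parts exactly as you describe. Your exponent bookkeeping in (ii) checks out, so no further changes are needed.
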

 \begin{proof} For $x\in\mathbb{R}^n\setminus\{\theta\}$, it follws from Lemma \ref{le:31} that
 \begin{align}|\mathcal{H}_n x^m|=&\left|\sum_{i_1,i_2,\cdots,i_m=1}^n\frac{x_{i_1}x_{i_2}\cdots x_{i_m}}{i_1+i_2+\cdots+i_m-m+1}\right|\nonumber\\
 \leq& \sum_{i_1,i_2,\cdots,i_m=1}^n\frac{|x_{i_1}x_{i_2}\cdots x_{i_m}|}{i_1+i_2+\underbrace{1+\cdots+1}_{m-2}-m+1}\nonumber\\
 =&\sum_{i_1,i_2,\cdots,i_m=1}^n\frac{|x_{i_1}||x_{i_2}|\cdots |x_{i_m}|}{i_1+i_2-1}\nonumber\\
 =&\left(\sum_{i_1=1}^n\sum_{i_2=1}^n\frac{|x_{i_1}||x_{i_2}|}{i_1+i_2-1}\right)\sum_{i_3,i_4,\cdots,i_m=1}^n|x_{i_3}||x_{i_4}|\cdots |x_{i_m}|\nonumber\\
\leq& \left(\|x\|_2^2n\sin\frac{\pi}{n}\right)\left(\sum_{i=1}^n|x_{i}|\right)^{m-2}\nonumber\\
 =&\left(n\sin\frac{\pi}{n}\right)\|x\|_2^2\|x\|_1^{m-2}.\label{eq:38}
\end{align}

 (i) From (\ref{eq:31}), it follows that $\|x\|_1\leq \sqrt{n}\|x\|_2$ for $x\in \mathbb{R}^n$. Then  we have
 $$\begin{aligned}\mathcal{H}_n x^m\leq& (n\sin\frac{\pi}{n})\|x\|_2^2\|x\|_1^{m-2}\\ \leq& (n\sin\frac{\pi}{n})n^{\frac{m-2}2}\|x\|_2^m\\
 =&(n^{\frac{m}2}\sin\frac{\pi}{n})\|x\|_2^m,
 \end{aligned}$$
 and hence, for $x\in\mathbb{R}^n\setminus\{\theta\}$, $$\mathcal{H}_n\left(\frac{x}{\|x\|_2}\right)^m=\frac1{\|x\|_2^m}\mathcal{H}_n x^m\leq n^{\frac{m}2}\sin\frac{\pi}{n}.$$
 It follows from (\ref{eq:37}) of Lemma \ref{le:32} (ii) that
 $$\rho(T_n)\leq n^{\frac{m}2}\sin\frac{\pi}{n}.$$

 (ii) From (\ref{eq:31}), it follows that $$\|x\|_2\leq n^{\frac12-\frac1m}\|x\|_m\mbox{ and }\|x\|_1\leq n^{1-\frac1m}\|x\|_m$$ for $m\geq2$ and $x\in \mathbb{R}^n$. Then by (\ref{eq:38}), we have
  $$\begin{aligned}\mathcal{H}_n x^m\leq& \left(n\sin\frac{\pi}{n}\right)\|x\|_2^2\|x\|_1^{m-2}\\ \leq& \left(n\sin\frac{\pi}{n}\right)\left(n^{1-\frac2m}\|x\|_m^2\right)\left(n^{(1-\frac1m)(m-2)}\|x\|_2^{m-2}\right)\\
  =&\left(n\sin\frac{\pi}{n}\right)\left(n^{m-2}\|x\|_m^m\right)\\
  =&\left(n^{m-1}\sin\frac{\pi}{n}\right)\|x\|_m^m,
  \end{aligned}$$
  and hence, for $x\in\mathbb{R}^n\setminus\{\theta\}$, $$\mathcal{H}_n\left(\frac{x}{\|x\|_m}\right)^m=\frac1{\|x\|_m^m}\mathcal{H}_n x^m\leq n^{m-1}\sin\frac{\pi}{n}.$$
  It follows from (\ref{eq:36}) of Lemma \ref{le:32} (i) that
  $$\rho(F_n)^{m-1}\leq n^{m-1}\sin\frac{\pi}{n}.$$
 This completes the proof.
  \end{proof}

 Recall that
 A $m$-order $r$-dimensional $\mathcal{B}$ is called {\em a principal sub-tensor}  of a $m$-order $n$-dimensional tensor $\mathcal{A}=(\mathcal{A}_{i_1\cdots i_m})$ ($r\leq n$), if $\mathcal{B}$ consists of $r^m$ elements in $\mathcal{A} = (a_{i_1\cdots i_m})$: for a set $\mathcal{N}$ that composed of $r$ elements in $ \{1,2,\cdots , n\}$,
 $$\mathcal{B} = (\mathcal{A}_{i_1\cdots i_m}),\mbox{ for all } i_1, i_2, \cdots, i_m\in \mathcal{N}.$$ The concept were first introduced and used by Qi \cite{LQ1} for  the higher order symmetric tensor. Clearly, a $m$-order $n_1$-dimensional Hilbert tensor $\mathcal{H}_{n_1}$ is a principal sub-tensor  of $m$-order $n_2$-dimensional Hilbert tensor $\mathcal{H}_{n_2}$ if $n_1\leq n_2$.
\begin{thm} \label{th:34}   If $n< k$, then
  $$ \rho(F_{n})< \rho(F_{k})\mbox{ and } \rho(T_{n})\leq \rho(T_{k}).$$
\end{thm}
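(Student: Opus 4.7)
The plan is to exploit the observation (recorded immediately before the theorem) that $\mathcal{H}_n$ is a principal sub-tensor of $\mathcal{H}_k$ whenever $n \leq k$. Concretely, any $x = (x_1,\ldots,x_n)^T \in \mathbb{R}^n$ zero-extends to $\widetilde x = (x_1,\ldots,x_n,0,\ldots,0)^T \in \mathbb{R}^k$, and this extension preserves every $p$-norm while satisfying $\mathcal{H}_k \widetilde x^m = \mathcal{H}_n x^m$. Combined with the variational formulas (\ref{eq:36}) and (\ref{eq:37}) of Lemma \ref{le:32}, each half of the theorem reduces to comparing the same functional maximized over a smaller versus a larger feasible set.

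For $\rho(T_n) \leq \rho(T_k)$, I would invoke Lemma \ref{le:32}(ii) to pick a nonnegative $Z$-eigenvector $x^* \in \mathbb{R}^n$ with $\|x^*\|_2 = 1$ and $\mathcal{H}_n (x^*)^m = \rho(T_n)$. Zero-extending to $\widetilde{x^*} \in \mathbb{R}^k$ produces a feasible point for the $k$-dimensional maximum (\ref{eq:37}) with the same value $\rho(T_n)$, and the inequality follows at once.

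The strict inequality $\rho(F_n) < \rho(F_k)$ requires a perturbation, since zero-extension alone only yields $\leq$. I would use Lemma \ref{le:32}(i) to choose a \emph{strictly positive} $H$-eigenvector $x^* \in \mathbb{R}^n_+$ with $\|x^*\|_m = 1$ and $\mathcal{H}_n (x^*)^m = \rho(F_n)^{m-1}$, and introduce, for small $t > 0$, the family
\[
z(t) = (x^*_1,\ldots, x^*_n,\, t,\, 0,\ldots,0)^T \in \mathbb{R}^k_+,
\]
with the entry $t$ sitting in position $n+1 \leq k$. Expanding by symmetric multilinearity and separating terms according to how many indices equal $n+1$ gives
\[
\mathcal{H}_k z(t)^m = \rho(F_n)^{m-1} + m\,t \sum_{i_1,\ldots,i_{m-1}=1}^n \mathcal{H}_{i_1 \cdots i_{m-1},\, n+1}\, x^*_{i_1} \cdots x^*_{i_{m-1}} + O(t^2),
\]
and $\|z(t)\|_m^m = 1 + t^m$. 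The coefficient of $t$ is strictly positive since every entry of $\mathcal{H}_k$ is positive and every $x^*_{i_j}$ is positive; because $m \geq 2$, the denominator $1 + t^m$ differs from $1$ only at order $t^m$, which cannot cancel that first-order growth. Hence
\[
\mathcal{H}_k\!\left(\frac{z(t)}{\|z(t)\|_m}\right)^{\!m} > \rho(F_n)^{m-1}
\]
for all sufficiently small $t > 0$, and applying (\ref{eq:36}) to $\mathcal{H}_k$ yields $\rho(F_k)^{m-1} > \rho(F_n)^{m-1}$, whence $\rho(F_k) > \rho(F_n)$.

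The main obstacle is securing the strict inequality in the $F$-case: without the strict positivity of the $H$-eigenvector furnished by Lemma \ref{le:32}(i), the linear coefficient in the Taylor expansion could vanish and the perturbation would collapse to a mere non-strict bound. The whole argument therefore rests on the fact that every entry of the Hilbert tensor is strictly positive, which through Lemma \ref{le:32}(i) propagates to the $H$-eigenvector realizing the maximum.
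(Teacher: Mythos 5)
Your proof is correct, and for the strict inequality $\rho(F_n)<\rho(F_k)$ it takes a genuinely different route from the paper. The paper also begins from the principal sub-tensor observation and Lemma \ref{le:32}, but it argues by extending the positive $H$-eigenvector $x_{(n)}$ of $\mathcal{H}_n$ by zeros to a vector $x'\in\mathbb{R}^k$, asserting that $x'$ is an eigenvector of $\mathcal{H}_k$ for the eigenvalue $\rho(F_n)^{m-1}$, and then comparing this eigenvalue (whose eigenvector has zero components) with $\rho(F_k)^{m-1}$ (whose eigenvector is positive) in Perron--Frobenius style; the $T$-case is dispatched with a one-line ``similarly.'' Note that the zero-extension $x'$ does not literally satisfy $\mathcal{H}_k (x')^{m-1}=\lambda (x')^{[m-1]}$ in the components $i>n$, where the left side is positive and the right side vanishes, so the paper's strictness step leans on nonnegative-tensor eigenvalue comparison results that it does not spell out. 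Your first-order perturbation of the maximizer in the variational formula (\ref{eq:36}) — inserting a small $t>0$ in coordinate $n+1$, noting the linear term $m\,t\sum \mathcal{H}_{i_1\cdots i_{m-1},\,n+1}x^*_{i_1}\cdots x^*_{i_{m-1}}>0$ while the normalization $\|z(t)\|_m^m=1+t^m$ only perturbs at order $t^m=o(t)$ — is self-contained, uses only the positivity of the tensor entries, and cleanly delivers the strict inequality; indeed it needs only that $x^*$ is nonnegative and nonzero (some $x^*_j>0$ already makes the linear coefficient positive via the term $\mathcal{H}_{j\cdots j,\,n+1}(x^*_j)^{m-1}$), so your closing worry that strict positivity of the eigenvector is essential is overly cautious. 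Your treatment of $\rho(T_n)\leq\rho(T_k)$ by zero-extension and (\ref{eq:37}) matches the paper's intent exactly.
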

\begin{proof}  Since $n<k$, then  $\mathcal{H}_{n}$ is a principal sub-tensor  of $\mathcal{H}_{k}$. It follows from Lemma \ref{le:32} (i) that $\rho(F_{n})^{m-1}$ is a positive eigenvalue of $\mathcal{H}_{n}$ with positive eigenvector $x_{(n)}=(x^{(n)}_1,\cdots,x^{(n)}_n)$,  and hence $\rho(F_{n})^{m-1}$ is an eigenvalue of of $\mathcal{H}_{k}$ with corresponding eigenvector $x'=(x^{(n)}_1,\cdots,x^{(n)}_n,\underbrace{0,\cdots,0}_{k-n})$. Since $\rho(F_{k})^{m-1}$ is  positive eigenvalue  of $\mathcal{H}_{k}$ with positive eigenvector $x_{(k)}=(x^{(k)}_1,\cdots,x^{(k)}_k)$ by Lemma \ref{le:32} (i), then $$\rho(F_{n})^{m-1}< \rho(F_{k})^{m-1},$$
 and hence, $\rho(F_{n})< \rho(F_{k}).$

Similarly, applying Lemma \ref{le:32} (ii), we also have $$\rho(T_{n})\leq \rho(T_{k}).$$ The desired conclusion follows.
\end{proof}

{\bf Remark 2.} (i) In Theorem \ref{th:34}, the monotonicity of the spectral radius with respect to the dimensionality $n$ is proved. Then  whether or not  the eigenvector $x_{(n)}$ associated with the spectral radius is the same monotonicity.

(ii) In Theorem \ref{th:33}, the upper bounds of two classes of spectral radii are established.  It is not clear whether these two upper bounds may be  attained or only one of these two upper bounds may be attained or both cannot be attained.

\section*{\bf Acknowledgment}
The authors would like to thank Dr. Guoyin Li, Prof. Yimin Wei and
Mr. Weiyang Ding for their valuable suggestions.   In particular, in
the original draft, we only proved that $\mathcal{H}_n$ and
$\mathcal{H}_\infty$ are positive semi-definite.   Dr. Guoyin Li
suggested the current proof for showing that $\mathcal{H}_n$ and
$\mathcal{H}_\infty$ are positive definite.

\end{document}